\numberwithin{equation}{section}
\theoremstyle{plain} 
\newtheorem{theorem}{\indent\sc Theorem}[section]
\newtheorem{lemma}[theorem]{\indent\sc Lemma}
\theoremstyle{definition} 
\newtheorem{remark}[theorem]{\indent\sc Remark}
\title[A remark on the first eigenvalue of the $p$-Laplacian]{\sc A remark on the first eigenvalue of the $p$-Laplacian on compact submanifolds in the unit sphere}
\author[F.R. dos Santos and M.N. Soares]{F\'abio R. dos Santos$^{\ast}$ and Matheus N. Soares}
\address{
Departamento de Matem\'atica \\
Universidade Federal de Pernambuco \\
50.740-540 Recife, Pernambuco \\
Brazil}
\email{fabio.reis@ufpe.br}
\email{matheus.nsoares@ufpe.br}
\keywords{Compact submanifods, mean convex boundary, Dirichlet boudary condition, first eigenvalue, $p$-Laplacian}
\subjclass[2020]{Primary 53C42; Secondary 53A10, 53C20.}
\thanks{$^{\ast}$Corresponding author}
\begin{document}
	
\begin{abstract}
An integral inequality for the singular $p$-laplacian is established for $3/2<p<2$. As consequence, lower bounds for the first eigenvalue of the $p$-laplacian are obtained for minimal submanifolds and prescribed scalar curvature submanifolds in the unit sphere.
\end{abstract}
	
\maketitle

\section{Introduction}

Over the past several decades, there has been a discernible surge in scholarly interest directed towards the investigation of the $p$-Laplacian for values of $p$ within the interval $(1,2)$, particularly in its singular case. We recall that the $p$-Laplacian on a compact Riemannian manifold $M^{n}$ is defined as the second order quasilinear elliptic operator
\begin{equation}\label{eq:1.0}
\Delta_{p}u=-{\rm div}(|\nabla u|^{p-2}\nabla u),\quad 1<p<\infty.
\end{equation}

Within the domain of mathematical analysis, this surge in interest has engendered a concomitant exploration into the characterization of functions $f$ such that they satisfy the equation $\Delta_p u=f$. Consequently, a pivotal pursuit in this field pertains to the identification and analysis of functions that fulfill this criterion. Among the myriad attributes under scrutiny, one finds the intriguing properties of monotonicity and symmetry. Notably, these characteristics, along with others, have been the subject of rigorous examination and elucidation in seminal works such as that of Damascelli \cite{Dama:98}. From the perspective of Physics, various applications arise, such as those concerning pseudo-plastic fluids. For instance, one notable application pertains to the study of unsaturated flow for $p=3/2$ (cf. \cite{Diaz:94}) and glaciology for $p\in (1,4/3]$. Notably, within the $3/2<p<2$, Leibenson conducted an investigation into the turbulent filtration of gas in porous mediums (for further details, refer to \cite{Leibenson:45}). Additionally, comprehensive insights into this subject can be found in the work of Benedikt (cf. \cite{Benedikt:18}).

We can consider the eigenvalue problem of $\Delta_{p}$ similarly as the usual laplacian (when $p=2$). We say that a real number $\lambda$ is a {\em Dirichlet eigenvalue} if there exists a non-zero function $u$ satisfying the following equation:
\begin{equation}\label{eq:1.1}
\Delta_{p}u=\lambda|u|^{p-2}u\quad\mbox{in}\quad M^{n}\quad\mbox{and}\quad u=0\quad\mbox{on}\quad\partial M.
\end{equation}
According to \cite{Lindqvist:15}, these numbers $\lambda$ forms a non-increasing sequence such that there exists an isolated minimum eigenvalue called {\em the first eigenvalue} of the $p$-Laplacian (see also~\cite{le:06}). So, the first nontrivial Dirichlet eigenvalue of $M^{n}$ is given by
\begin{equation*}\label{eq:1.2}
\lambda_{1,p}(M)=\inf\left\{\dfrac{\int_{M}|\nabla u|^{p}dM}{\int_{M}|u|^{p}dM}\,;\,u\in W^{1,p}_{0}(M)\backslash\{0\}\right\}.
\end{equation*}

In the field of geometric analysis theory, there is a natural way to connect the geometric features of a Riemannian manifold with the $p$-Laplacian through its eigenvalues. In this setting, Matei~\cite{Matei:00} explored how these elements interact with geometry. In fact, she extended several intrinsic geometric results initially established for the standard Laplacian to encompass the $p$-Laplacian. Notable extensions include adaptations for $p>2$ of Chern's comparison principle for geodesic balls, an expansion of the Faber-Krahn inequality, and a broadening of the Lichnerowicz-Obata theorem. Most results presented by Matei are only established for $p>2$.

On the other hand, in the context of isometric immersions in the unit sphere $\mathbb{S}^{n+q}$, Leung~\cite{Leung:83} established a sharp estimate between the eigenvalues of the usual Laplacian for minimal submanifolds of sphere through an integral inequality. In fact, Leung presented a lower bound for the square length of the second fundamental form $S$ via any eigenvalue of the Laplace-Beltrami operator. Besides that, he showed that spheres are obtained when the equality happens. Years later, Liu and Zhang \cite{Liu:07} proved a similar estimate for arbitrary submanifolds of sphere. 

In a recent contribution~\cite{Santos:23}, the authors undertook an investigation into compact minimal submanifolds within the unit sphere. They introduced a divergence type operator and proceeded to derive Bochner and Reilly formulas pertinent to it. As a consequential application, they derived integral inequalities which encompassed the squared norm of the second fundamental form. Notably, this extension augmented the prior findings of Leung \cite{Leung:83} to encompass the first eigenvalue of the $p$-Laplacian, particularly within the context of manifolds featuring boundaries. It is worth emphasizing that the validity of the results posited by the authors is contingent upon the restriction that $p$ does not fall within the range $1<p<2$. In this paper, our primary objective is extend the results presented in \cite{Santos:23,Santos:24} for $p\in \left(3/2,2\right)$.

\section{Some preliminaries and key lemmas}\label{preliminares}

Let $M^{n}$ be an $n$-dimensional connected compact (with nonempty boundary $\partial M$) submanifold immersed in a unit Euclidean sphere $\mathbb{S}^{n+q}$ with codimension $q$. We will make use of the following convention on the range of indices:
\begin{equation}\label{eq:2.1}
1\leq A,B,C,\ldots,\leq n+q,\quad1\leq i,j,k,\ldots\leq n\quad\mbox{and}\quad n+1\leq \alpha,\beta,\gamma,\ldots\leq n+q.
\end{equation}
We choose a local field of orthonormal frame $\{e_{1},\ldots,e_{n+q}\}$ in $\mathbb{S}^{n+q}$, with dual coframe $\{\omega_{1},\ldots,\omega_{n+q}\}$, such that, at each point of $M^{n}$, $\{e_{i}\}_{i=1}^{n}$ are tangent to $M^{n}$ and $\{e_{\alpha}\}_{\alpha=n+1}^{n+q}$ are normal to $M^{n}$. Let us consider $\{\omega_{B}\}$ the corresponding dual coframe, and $\{\omega_{BC}\}$ the connection $1$-forms on $\mathbb{S}^{n+q}$. Restricting on $M^{n}$, we have 
\begin{equation}
\omega_{\alpha}=0,\quad n+1\leq\alpha \leq n+q,
\end{equation}
and consequently, by Cartan's Lemma we can write
\begin{equation}
\omega_{i\alpha}=\sum_{j}h_{ij}^{\alpha}\omega_{j},\quad h^{\alpha}_{ij}=h^{\alpha}_{ji},
\end{equation}
which gives the second fundamental form $A$ of $M^{n}$,
\begin{equation}
A=\sum_{\alpha,i,j}h_{ij}^{\alpha}\omega_{i}\otimes\omega_{j}\otimes e_{\alpha}.
\end{equation}
Moreover, we define the mean curvature vector field and the mean curvature function by
\begin{equation}
h=\dfrac{1}{n}\sum_{\alpha}H^{\alpha}e_{\alpha}\quad\mbox{and}\quad H=|h|,
\end{equation}
where $H^{\alpha}=\sum_{j}h_{jj}^{\alpha}$. In particular we say that $M^{n}$ has constant mean curvature if the function $H$ is constant. In the case where  this constant is zero, we say that $M^{n}$ is a minimal submanifold of $\mathbb{S}^{n+q}$.

Directly related to the second fundamental $A$ of $M^{n}$ we have the Gauss equation
\begin{equation}\label{eq:2.4}
R_{ijkl}=(\delta_{ik}\delta_{jl}-\delta_{il}\delta_{jk})+\sum_{\alpha}(h_{ik}^{\alpha}h_{jl}^{\alpha}-h_{il}^{\alpha}h_{jk}^{\alpha}).
\end{equation}
From~\eqref{eq:2.4}, we obtain the Ricci curvature tensor and the normalized scalar curvature $R$, respectively, by
\begin{equation}\label{eq:2.5}
R_{ik}=(n-1)\delta_{ik}+n\sum_{\alpha}H^{\alpha}h_{ik}^{\alpha}-\sum_{\alpha,j}h_{ij}^{\alpha}h_{jk}^{\alpha}
\end{equation}
and
\begin{eqnarray}\label{eq:2.6}
R=\dfrac{1}{(n-1)}\sum_{i}R_{ii}.
\end{eqnarray}
By using equations~\eqref{eq:2.5} and~\eqref{eq:2.6}, we get the following relation
\begin{eqnarray}\label{eq:2.7}
n(n-1)R=n(n-1)+n^{2}H^{2}-S,
\end{eqnarray}
where
\begin{equation}\label{eq:2.8}
S=\sum_{\alpha,i,j}(h_{ij}^{\alpha})^{2},
\end{equation}
denotes the squared norm of the second fundamental form.

In order to proof our main results, we need of the following two results. The first one is a low estimate of the Ricci curvature in terms of the squared norm of the second fundamental form and the mean curvature function:
\begin{lemma}{\cite[Main Theorem]{Leung:92}}\label{lem:2.1}
Let $M^n$ be a submanifold of the Riemannian manifold $\mathbb{S}^{n+q}$. Let ${\rm Ric}$ denotes the function that assigns to each point of $M^{n}$ the minimum Ricci curvature. Then
\begin{equation}\label{eq:2.9}
{\rm Ric}\geq-\dfrac{n-1}{n}\left(S+\frac{n(n-2)}{\sqrt{n(n-1)}}H\sqrt{S-nH^2}-n-2nH^2\right).
\end{equation}
\end{lemma}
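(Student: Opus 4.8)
The plan is to work at a fixed point $p\in M^n$ and diagonalize the shape operator in the direction that realizes the minimum Ricci curvature. Concretely, let $X=e_1$ be a unit tangent vector at $p$ with $\mathrm{Ric}(X)=\mathrm{Ric}$, and choose the orthonormal frame so that the relevant quantities simplify. From the Gauss equation \eqref{eq:2.5} we have
\begin{equation*}
\mathrm{Ric}(e_1)=(n-1)+n\sum_{\alpha}H^{\alpha}h_{11}^{\alpha}-\sum_{\alpha,j}(h_{1j}^{\alpha})^{2}.
\end{equation*}
First I would set $a_\alpha=h_{11}^{\alpha}$ and $b_\alpha^{2}=\sum_{j}(h_{1j}^{\alpha})^{2}\ge a_\alpha^{2}$, so that
\begin{equation*}
\mathrm{Ric}(e_1)\ge (n-1)+n\sum_{\alpha}H^{\alpha}a_\alpha-\sum_{\alpha}b_\alpha^{2},
\end{equation*}
and the task becomes: \emph{subject to the constraints coming from} $S=\sum_{\alpha,i,j}(h_{ij}^{\alpha})^{2}$ \emph{and} $nH=|(H^{\alpha})|$, \emph{bound the right-hand side from below}. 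The key algebraic point is that once one fixes the contribution of the first row $\sum_\alpha b_\alpha^2$, the remaining entries of $A$ contribute at least enough to keep $S$ as large as it is, while the term $n\sum_\alpha H^\alpha a_\alpha$ is controlled by Cauchy–Schwarz in the normal bundle.

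The second step is the optimization itself. Writing $t=\sum_{\alpha}b_\alpha^{2}$ for the squared norm of the first row block of $A$, one shows (by an eigenvalue/Lagrange-multiplier argument on the symmetric matrices $H^{\alpha}=(h_{ij}^{\alpha})$, exactly as in Leung's original argument) that the worst case occurs when all but one normal direction is "used up" and the shape operator in that direction has the form $\mathrm{diag}(\mu,\nu,\dots,\nu)$; this reduces everything to a one- or two-variable calculus problem. The outcome of that reduction is precisely the function
\begin{equation*}
f(S,H)=S+\frac{n(n-2)}{\sqrt{n(n-1)}}H\sqrt{S-nH^{2}}-n-2nH^{2},
\end{equation*}
whose appearance of $\sqrt{S-nH^{2}}$ is exactly the $\ell^2$-norm of the traceless part of the relevant shape operator (note $S\ge nH^2$ always, so the square root is real), and the coefficient $\frac{n(n-2)}{\sqrt{n(n-1)}}$ comes from balancing the linear term $n H^{\alpha}a_\alpha$ against the quadratic cost in $S$. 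Thus $\mathrm{Ric}\ge -\frac{n-1}{n}f(S,H)$, which is \eqref{eq:2.9}.

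Since this is quoted as \cite[Main Theorem]{Leung:92}, I would not reprove it in full; instead I would cite Leung and only indicate the normalization. The main obstacle, were one to carry it out from scratch, is the optimization over the full normal bundle: the bilinear term $n\sum_\alpha H^\alpha h_{11}^\alpha$ couples the mean curvature vector to the shape operator across all codimensions, so one must argue that rotating the normal frame to align $h$ with a single $e_\alpha$ is optimal (or at worst cannot help the lower bound), and then handle the constrained extremum of a quadratic-minus-linear expression in the eigenvalues of a single symmetric matrix with fixed trace and fixed Frobenius norm — a Lagrange-multiplier computation whose boundary cases must be checked to confirm that the stated bound is sharp (equality for appropriate Clifford-type or totally geodesic configurations).
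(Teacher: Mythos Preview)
The paper does not give a proof of this lemma at all; it is simply quoted as \cite[Main Theorem]{Leung:92} and used as a black box. Your instinct to cite Leung rather than reprove it is exactly what the paper does, and your sketch of the underlying argument (diagonalize at a point, reduce via Cauchy--Schwarz and a Lagrange-multiplier optimization on the eigenvalues of the traceless shape operator) is consistent with Leung's original proof, so nothing more is needed here.
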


Before to present the second result, we will recall some facts about isometric immersions with nonempty boundary. Let us consider $\eta$ the outer unit normal field of $\partial M$. We define the shape operator $\mathcal{A}_{\eta}$ and the mean curvature function of $\partial M$ in $M^{n}$, respectively, by
\begin{equation}\label{eq:2.10}
\mathcal{A}_{\eta}(X)=-\nabla_{X}\eta\quad\mbox{and}\quad\mathcal{H}=\dfrac{1}{n-1}{\rm tr}(\mathcal{A}_{\eta}),
\end{equation}
for any $X\in\mathfrak{X}(\partial M)$, where $\nabla$ denotes the Levi-Civita connection of $M^{n}$. In this setting let us recall that a compact manifold is said to have {\em mean convex boundary} if the mean curvature $\mathcal{H}$ is nonpositive on $\partial M$. Let us denote by $\nabla^{\partial}$ and $\Delta^{\partial}$ the covariant derivative and the Laplacian operator on $\partial M$ with respect to the induced Riemannian metric. In this picture, the second key result is a suitable version of~\cite[Proposition 4.1]{Santos:23} for our interest.

\begin{lemma}\label{lem:2.2}
Let $M^{n}$ be a compact manifold with mean convex boundary $\partial M$. If $u$ is a solution of the Dirichlet eigenvalue problem~\eqref{eq:1.0}, then
\begin{equation}\label{eq:2.11}
\int_{M}{\rm Ric}(\nabla u,\nabla u)|\nabla u|^{2p-4}dM<\lambda^{2}\widetilde{C}\int_{M}|\nabla u|^{2p-2}dM,
\end{equation}
where $\widetilde{C}:=\widetilde{C}(n,p,M)$ is a positive constant depending only $n$, $p$ and $M^{n}$.
\end{lemma}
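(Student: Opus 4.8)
The plan is to adapt the Bochner--Reilly machinery of \cite{Santos:23} to the singular range $p\in(3/2,2)$, where the weight $|\nabla u|^{p-2}$ blows up on the (nonempty) critical set $\{\nabla u=0\}$. First I would regularize: replace $|\nabla u|$ by $(|\nabla u|^2+\varepsilon)^{1/2}$ and work with the smooth function $u_\varepsilon$ and the approximating operator $\Delta_{p,\varepsilon}u=-{\rm div}\big((|\nabla u|^2+\varepsilon)^{(p-2)/2}\nabla u\big)$. The point of requiring $p>3/2$ (rather than merely $p>1$) is that the integrand ${\rm Ric}(\nabla u,\nabla u)|\nabla u|^{2p-4}$ behaves like $|\nabla u|^{2p-2}$ near the critical set, and $2p-2>1$ ensures the regularized integrals converge and the error terms from differentiating the weight are controlled uniformly in $\varepsilon$; I would make this quantitative via the standard $W^{1,p}$-regularity of $p$-harmonic-type functions and the fact that $|\nabla u|\in L^\infty$ with $|\nabla u|^{p-2}|\mathrm{Hess}\,u|^2$ integrable.

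The core computation is a Bochner--Weitzenböck identity for $|\nabla u|^{2p-2}$ (equivalently for the vector field $|\nabla u|^{p-2}\nabla u$): integrating $\tfrac12\Delta |\nabla u|^{2p-2}$ over $M$ and using the eigenvalue equation \eqref{eq:1.1} to rewrite $\Delta_p u = \lambda |u|^{p-2}u$, one obtains, after integration by parts, a relation of the schematic form
\begin{equation}\label{eq:plan1}
\int_M {\rm Ric}(\nabla u,\nabla u)|\nabla u|^{2p-4}\,dM + \int_M |\nabla u|^{2p-4}Q(\mathrm{Hess}\,u)\,dM = \lambda^2 \int_M |\nabla u|^{2p-2}\,dM + \text{(boundary terms)},
\end{equation}
where $Q(\mathrm{Hess}\,u)$ is a nonnegative quadratic form in the Hessian (coming from $|\mathrm{Hess}\,u|^2$ together with the extra terms produced by the $|\nabla u|^{p-2}$ weight; here one needs $p$ close to $2$, or more precisely $p>3/2$, for the relevant algebraic coefficient to keep this term nonnegative after applying Cauchy--Schwarz to separate the $\nabla|\nabla u|$ contributions). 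Dropping the nonnegative Hessian term yields the inequality $\le \lambda^2 C \int_M |\nabla u|^{2p-2}$ up to boundary contributions.

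The boundary terms are where the mean convexity hypothesis enters: the integration by parts produces a term involving $\int_{\partial M} |\nabla u|^{2p-4}\,\langle \nabla_\eta \nabla u, \nabla u\rangle$ which, since $u\equiv 0$ on $\partial M$ forces $\nabla u = (\partial_\eta u)\,\eta$ there, reduces to $-\int_{\partial M} |\partial_\eta u|^{2p-2}\mathcal{H}\,d(\partial M)$ plus tangential derivatives that vanish; mean convexity $\mathcal{H}\le 0$ makes this boundary term have the favorable sign, and combined with the positive Hessian term being strictly positive on a set of positive measure (since $u$ is nonconstant) it yields the \emph{strict} inequality in \eqref{eq:2.11}. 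I would then let $\varepsilon\to 0$: monotone/dominated convergence on the left using ${\rm Ric}\ge -c$ from Lemma~\ref{lem:2.1} (so the negative part is integrable) and on the right using $|\nabla u|\in L^\infty$, absorbing all $n,p,M$-dependent constants into $\widetilde C$.

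The main obstacle is the regularization step together with controlling the Hessian quadratic form: one must verify that the extra terms generated by differentiating $|\nabla u|^{p-2}$ (which carry a factor $p-2<0$ and a factor $|\nabla u|^{-2}|\nabla|\nabla u||^2$ that is singular on the critical set) can be absorbed into $|\mathrm{Hess}\,u|^2$ with a nonnegative leftover, and that all limits pass through. This is precisely the place the constraint $p>3/2$ is forced — it is the threshold below which either the coefficient of the Hessian term turns negative or the boundary/critical-set integrals fail to converge — and it is the technical heart of the argument; everything else is a direct transcription of the $p=2$ Reilly-formula proof and the $p\ge 2$ argument of \cite{Santos:23}.
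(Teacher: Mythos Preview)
Your proposal has the right overall architecture---Bochner/Reilly identity, sign control on the Hessian quadratic via $p>3/2$, boundary term handled by mean convexity---but there is a genuine gap in your schematic \eqref{eq:plan1}. The integrated identity (this is exactly \cite[Proposition~4.1]{Santos:23}, which the paper simply quotes without any regularization) does \emph{not} produce $\lambda^{2}\int_{M}|\nabla u|^{2p-2}\,dM$ on the right-hand side. What it produces is $\int_{M}(\Delta_{p}u)^{2}\,dM=\lambda^{2}\int_{M}|u|^{2p-2}\,dM$: the eigenvalue equation yields powers of $|u|$, not of $|\nabla u|$. Converting $\int_{M}|u|^{2p-2}\,dM$ into a multiple of $\int_{M}|\nabla u|^{2p-2}\,dM$ is the step you are missing, and it is not free: the paper does it via H\"older together with the Gagliardo--Nirenberg--Sobolev inequality on $W^{1,2p-2}_{0}(M)$, using that $1<2p-2<p$ (which again requires $p>3/2$). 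This Sobolev passage is precisely where the $M$-dependent constant $\widetilde{C}$ enters; in your outline $\widetilde{C}$ has no identifiable source.

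This omission also undermines your account of the \emph{strict} inequality. You attribute strictness to the Hessian term being positive on a set of positive measure, but in the paper strictness comes from the H\"older/Sobolev step: equality in H\"older would force $|u|\equiv 1$ almost everywhere, which is incompatible with $u$ being a nontrivial Dirichlet eigenfunction. Your $\varepsilon$-regularization discussion is plausible but orthogonal to the real issue; the analytic core of the paper's argument is (i) two applications of Cauchy--Schwarz---first using $p<2$ to absorb the $|\mathrm{Hess}\,u(\nabla u)|^{2}$ term into $|\mathrm{Hess}\,u|^{2}$ with coefficient $-(2p-3)$, then using $2p-3>0$ to bound that by $(\Delta_{p}u)^{2}$---and (ii) the Sobolev embedding, which you have not mentioned.
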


\begin{proof}
From~\cite[Proposition 4.1]{Santos:23}, we have
\begin{equation}\label{eq:2.14}
\begin{split}
\int_{M}{\rm Ric}&(\nabla u,\nabla u)|\nabla u|^{2p-4}dM\\
&=\int_M\left((\Delta_p u)^2-|\nabla u|^{2p-4}|{\rm Hess}\,u|^2\right)dM+\int_{\partial M}|\nabla u|^{2p-4}\mathcal{Q}(u)d\sigma\\
&\quad-(p-2)\int_M|\nabla u|^{2p-6}\left((p-2)(\Delta_{\infty}u)^2|\nabla u|^{2}+2|{\rm Hess}\,u(\nabla u)|^2\right)dM,
\end{split}
\end{equation}
where $|\nabla u|^{2}\Delta_{\infty}u=\langle{\rm Hess}\,u(\nabla u),\nabla u\rangle$ and $\mathcal{Q}(u)$ is given by
\begin{equation}\label{eq:2.12}
\mathcal{Q}(u)=du(\eta)\left(\Delta^{\partial}u+(n-1)\mathcal{H}\,du(\eta)\right)+\langle\mathcal{A}_{\eta}(\nabla^{\partial}u),\nabla^{\partial}u\rangle+\langle\nabla^{\partial}u,\nabla^{\partial}du(\eta)\rangle.
\end{equation}
with $du(\eta)=\partial\eta/\partial u$.

On the one hand, since $p<2$, from Cauchy-Schwarz's inequality, \eqref{eq:2.14} reads
\begin{equation}\label{eq:3.32}
\begin{split}
\int_{M}|\nabla u|^{2p-4}{\rm Ric}(\nabla u,\nabla u)dM&\leq-(2p-3)\int_{M}|\nabla u|^{2p-4}|{\rm Hess}\,u|^{2}dM\\
&\quad+\int_{M}(\Delta_{p}u)^{2}dM-(p-2)^{2}\int_{M}|\nabla u|^{2p-4}(\Delta_{\infty}u)^{2}dM\\
&\qquad+\int_{\partial M}|\nabla u|^{2p-4}\mathcal{Q}(u)d\sigma.
\end{split}
\end{equation}
By using that $p>3/2$, we can use again the Cauchy-Schwarz's inequality in order to
\begin{equation}\label{eq:3.32.1}
-(2p-3)|\nabla u|^{2p-4}|{\rm Hess }u|^2\leq-\dfrac{(2p-3)}{n}(\Delta_p u)^2
\end{equation}
with equality holding if and only if
\begin{equation}\label{eq:2.16}
(p-2)|\nabla u|^{p-4}\langle{\rm Hess}\,u(\nabla u),X\rangle\nabla u+|\nabla u|^{p-2}{\rm Hess}\,u(X)=-\dfrac{1}{n}(\Delta_{p}u)I(X),
\end{equation}
for all $X\in\mathfrak{X}(M)$. Thus, by inserting~\eqref{eq:3.32.1} in~\eqref{eq:3.32}, we get
\begin{equation}
\begin{split}
\int_{M}|\nabla u|^{2p-4}{\rm Ric}(\nabla u,\nabla u)dM&\leq\dfrac{n-2p+3}{n}\int_{M}(\Delta_p u)^2dM-(p-2)^{2}\int_{M}|\nabla u|^{2p-4}(\Delta_{\infty}u)^{2}dM\\
&\quad+\int_{\partial M}|\nabla u|^{2p-4}\mathcal{Q}(u)d\sigma\\
&\leq\dfrac{n-2p+3}{n}\int_{M}(\Delta_p u)^2dM+\int_{\partial M}|\nabla u|^{2p-4}\mathcal{Q}(u)d\sigma
\end{split}
\end{equation}
Therefore, from~\eqref{eq:1.1}
\begin{equation}
\int_{M}|\nabla u|^{2p-4}{\rm Ric}(\nabla u,\nabla u)dM\leq\left(\dfrac{n-2p+3}{n}\right)\lambda^{2}\int_{M}|u|^{2p-2}dM+\int_{\partial M}|\nabla u|^{2p-4}\mathcal{Q}(u)d\sigma,
\end{equation}
with equality if and only if $\Delta_{\infty}u=0$. 

On the other hand, since $u=0$ on $\partial M$ and the boundary is mean convex, we can estimate the $\mathcal{Q}$ as follows
\begin{equation}
\int_{\partial M}|\nabla u|^{2p-4}\mathcal{Q}(u)d\sigma\leq0,
\end{equation}
and then
\begin{equation}\label{eq:3.35}
\int_{M}|\nabla u|^{2p-4}{\rm Ric}(\nabla u,\nabla u)dM\leq\lambda^{2}\left(\dfrac{n-2p+3}{n}\right)\int_{M}|u|^{2p-2}dM.
\end{equation}

Since $p\in(3/2,2)$, we have $1<2p-2<p$. Consequently, by Sobolev's embedding theory, $W^{1,p}_{0}(M)\subset W^{1,2p-2}_{0}(M)$. As $2p-2<(2p-2)^{\ast}$, from H\"older inequality,
\begin{equation}\label{eq:3.36.1}
\int_{M}|u|^{2p-2}dM\leq{\rm vol}(M)^{\frac{2p-2}{r}}\left(\int_{M}|u|^{(2p-2)^{\ast}}dM\right)^{\frac{2p-2}{(2p-2)^{\ast}}}
\end{equation}
for $r>1$. Hence, by using Gagliardo-Nirenberg-Sobolev's inequality for compact Riemannian manifolds with boundary~\cite{Aubin:12} (see also~\cite{Hebey:00}), we have
\begin{equation}\label{sobolev}
\int_{M}|u|^{2p-2}dM\leq{\rm vol}(M)^{\frac{2p-2}{r}}C_{1}\int_{M}|\nabla u|^{2p-2}dM,
\end{equation}
where $C_{1}$ is a constant positive depending on $M^{n}$. By inserting this in~\eqref{eq:3.35},
\begin{equation}\label{eq:3.35.1}
\int_{M}|\nabla u|^{2p-4}{\rm Ric}(\nabla u,\nabla u)dM\leq\lambda^{2}\widetilde{C}\int_{M}|\nabla u|^{2p-2}dM,
\end{equation}
where $\widetilde{C}$ is a positive constant depending only $n$, $p$ and $M^{n}$.

By assuming the equality case, then all inequalities become into equalities, in particular the equality~\eqref{eq:3.36.1} implies in the equality in H\"older inequality. Hence, $|u|=1$ almost everywhere and then the $p$-Laplacian of $u$ is identically zero, which cannot happens since the first eigenvalue if nonzero. Therefore, the inequality~\eqref{eq:3.35.1} is not sharp.
\end{proof}


As a first application, we get the following lower estimate:
\begin{theorem}\label{principal}
Let $M^{n}$ be a compact Riemannian manifold with mean convex boundary and consider $M^{n}$ as a minimal submanifold of $\mathbb{S}^{n+q}$ with $S=const$. Then
\begin{equation}
\lambda_{1,p}(M)>\sqrt{(n-S)C},\qquad p\in(3/2,2),
\end{equation}
where $C$ is a positive constant depending only $n$, $p$ and $M^{n}$.
\end{theorem}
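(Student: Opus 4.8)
The plan is to combine the pointwise Ricci lower bound of Lemma~\ref{lem:2.1} with the integral inequality of Lemma~\ref{lem:2.2}, tested against a first eigenfunction. First I would take $u\in W^{1,p}_{0}(M)\setminus\{0\}$ realizing $\lambda:=\lambda_{1,p}(M)$, so that $\Delta_{p}u=\lambda|u|^{p-2}u$ in $M$ and $u=0$ on $\partial M$; in particular $u$ is non-constant, hence $\int_{M}|\nabla u|^{2p-2}\,dM>0$, a fact I will use to divide at the end.

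Next, since $M^{n}$ is minimal we have $H\equiv 0$, so every term of~\eqref{eq:2.9} carrying a factor of $H$ drops out and Lemma~\ref{lem:2.1} reduces to the pointwise estimate
\[
{\rm Ric}\ \geq\ -\frac{n-1}{n}\,(S-n)\ =\ \frac{(n-1)(n-S)}{n}.
\]
As $S$ is constant, the right-hand side is a fixed number, and since ${\rm Ric}$ denotes the minimum Ricci curvature we get ${\rm Ric}(\nabla u,\nabla u)\geq\frac{(n-1)(n-S)}{n}|\nabla u|^{2}$ at every point of $M$. Multiplying by the nonnegative weight $|\nabla u|^{2p-4}$ and integrating over $M$,
\[
\int_{M}{\rm Ric}(\nabla u,\nabla u)\,|\nabla u|^{2p-4}\,dM\ \geq\ \frac{(n-1)(n-S)}{n}\int_{M}|\nabla u|^{2p-2}\,dM.
\]

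Then I would invoke Lemma~\ref{lem:2.2}, whose left-hand side is exactly the quantity just bounded from below, to obtain
\[
\frac{(n-1)(n-S)}{n}\int_{M}|\nabla u|^{2p-2}\,dM\ <\ \lambda^{2}\,\widetilde{C}\int_{M}|\nabla u|^{2p-2}\,dM.
\]
Dividing through by $\int_{M}|\nabla u|^{2p-2}\,dM>0$ yields $\lambda^{2}>(n-S)\,\dfrac{n-1}{n\widetilde{C}}$; setting $C:=\dfrac{n-1}{n\widetilde{C}}>0$, which like $\widetilde{C}$ depends only on $n$, $p$ and $M^{n}$, and taking square roots gives $\lambda_{1,p}(M)>\sqrt{(n-S)C}$, as claimed. (The bound is only meaningful when $S\leq n$; if $S>n$ the right-hand side is not real and there is nothing to prove.)

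I do not expect a genuine obstacle here: all the analytic content --- the Reilly-type identity, the two Cauchy--Schwarz steps that force $3/2<p<2$, and the Sobolev/Gagliardo--Nirenberg estimate producing $\widetilde{C}$ --- is already packaged inside Lemma~\ref{lem:2.2}, and Lemma~\ref{lem:2.1} supplies the geometry. The only point worth a line of care is that passing from the pointwise Ricci bound to the weighted integral bound is legitimate despite the singular weight $|\nabla u|^{2p-4}$ (note $2p-4<0$ when $p<2$): this is fine because the weight is nonnegative, the pointwise inequality holds everywhere, and Lemma~\ref{lem:2.2} already certifies that the integral on the left is finite. Finally, the strictness of the conclusion --- hence that the estimate is never attained --- is inherited directly from the strict inequality in Lemma~\ref{lem:2.2}.
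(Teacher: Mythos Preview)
Your proposal is correct and follows essentially the same route as the paper: apply Lemma~\ref{lem:2.1} with $H=0$ to bound ${\rm Ric}(\nabla u,\nabla u)$ from below, feed this into the strict integral inequality of Lemma~\ref{lem:2.2}, and cancel the positive factor $\int_{M}|\nabla u|^{2p-2}\,dM$. Your additional remarks on the singular weight, strictness, and the case $S>n$ are fine side comments but not needed for the argument.
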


\begin{proof}
From Lemma~\ref{lem:2.2},
\begin{equation}\label{eq:2.11.1}
\int_{M}{\rm Ric}(\nabla u,\nabla u)|\nabla u|^{2p-4}dM<\widetilde{C}\int_{M}|\nabla u|^{2p-4}dM.
\end{equation}

On the other hand, since $M^{n}$ is a minimal submanifold of $\mathbb{S}^{n+q}$, by taking $H=0$ in Lemma~\ref{lem:2.1},
\begin{equation}
{\rm Ric}(\nabla u,\nabla u)\geq-\dfrac{n-1}{n}\left(S-n\right)|\nabla u|^{2}.
\end{equation}
By replacing this in~\eqref{eq:2.11},
\begin{equation}\label{eq:2.11.2}
-\dfrac{n-1}{n}\left(S-n\right)\int_{M}|\nabla u|^{2p-2}dM<\lambda^{2}\widetilde{C}\int_{M}|\nabla u|^{2p-2}dM
\end{equation}
where was used that $S=const.$. Hence
\begin{equation}\label{eq:2.11.3}
0<\left(\lambda^{2}\widetilde{C}+\dfrac{n-1}{n}\left(S-n\right)\right)\int_{M}|\nabla u|^{2p-2}dM
\end{equation}
and consequently since $\int_{M}|\nabla u|^{2p-2}dM>0$, we have
\begin{equation}\label{eq:2.11.4}
\lambda^{2}>\dfrac{n}{(n-1)\widetilde{C}}\left(n-S\right).
\end{equation}
In particular,
\begin{equation}
\lambda_{1,p}(M)^{2}>(n-S)C,
\end{equation}
and we have the desired result.
\end{proof}

By assuming that submanifold has prescribed constant scalar curvature, we get
\begin{theorem}\label{thm:2.5}
Let $M^{n}$ be a compact Riemannian manifold with constant scalar curvature $R=1$ and mean convex boundary. Let us consider $M^{n}$ as a submanifold of $\mathbb{S}^{n+q}$ with $S=const.$. Then
\begin{equation}
\lambda_{1,p}(M)>\sqrt{(n^{2}-2(n-2)S)C},\qquad p\in(3/2,2),
\end{equation}
where $C$ is a positive constant depending only $n,p$ and $M^{n}$.
\end{theorem}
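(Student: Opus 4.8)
The plan is to mirror the argument of Theorem~\ref{principal}, replacing the minimality hypothesis by the scalar curvature constraint $R=1$. The first step is to extract the relevant consequence of that constraint: substituting $R=1$ into the Gauss-type relation~\eqref{eq:2.7} gives $n(n-1)=n(n-1)+n^{2}H^{2}-S$, hence
\begin{equation*}
n^{2}H^{2}=S,\qquad H=\frac{\sqrt{S}}{n}\ \ (\text{recall }H=|h|\geq 0),\qquad S-nH^{2}=\frac{(n-1)S}{n}.
\end{equation*}

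Next I would feed these identities into the pointwise Ricci estimate of Lemma~\ref{lem:2.1}. The cross term simplifies to
\begin{equation*}
\frac{n(n-2)}{\sqrt{n(n-1)}}\,H\sqrt{S-nH^{2}}=\frac{n(n-2)}{\sqrt{n(n-1)}}\cdot\frac{\sqrt{S}}{n}\cdot\sqrt{\frac{(n-1)S}{n}}=\frac{(n-2)S}{n},
\end{equation*}
while $2nH^{2}=2S/n$. Collecting the terms inside the parentheses of~\eqref{eq:2.9} yields $S+\tfrac{(n-2)S}{n}-n-\tfrac{2S}{n}=\tfrac{2(n-2)S-n^{2}}{n}$, so that
\begin{equation*}
{\rm Ric}(\nabla u,\nabla u)\geq\frac{n-1}{n^{2}}\bigl(n^{2}-2(n-2)S\bigr)|\nabla u|^{2}.
\end{equation*}

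Now let $u$ be a first eigenfunction, so that it solves~\eqref{eq:1.1} with $\lambda=\lambda_{1,p}(M)$ and $\int_{M}|\nabla u|^{2p-2}dM>0$. Since $S$ is constant, the coefficient of $|\nabla u|^{2}$ above is constant, and plugging the last inequality into~\eqref{eq:2.11} of Lemma~\ref{lem:2.2} and dividing through by $\int_{M}|\nabla u|^{2p-2}dM$ gives $\tfrac{n-1}{n^{2}}(n^{2}-2(n-2)S)<\lambda_{1,p}(M)^{2}\,\widetilde{C}$, that is,
\begin{equation*}
\lambda_{1,p}(M)^{2}>(n^{2}-2(n-2)S)\,C,\qquad C:=\frac{n-1}{n^{2}\widetilde{C}}>0,
\end{equation*}
and taking square roots finishes the proof.

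I do not expect a serious obstacle: the substance is the algebraic simplification of~\eqref{eq:2.9} under $n^{2}H^{2}=S$, which is purely mechanical once that identity is in hand, and all the analytic work is already packaged in Lemma~\ref{lem:2.2}. The only points worth a remark are that the strictness of the final inequality is inherited from the strict inequality in Lemma~\ref{lem:2.2}, that the conclusion remains (trivially) valid when $n^{2}-2(n-2)S\leq 0$, and that one must take $u$ to be a genuine eigenfunction so that $|\nabla u|$ does not vanish identically, ensuring the division step is legitimate.
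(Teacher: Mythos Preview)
Your proof is correct and follows essentially the same route as the paper: both use $R=1$ in~\eqref{eq:2.7} to get $n^{2}H^{2}=S$, substitute this into Lemma~\ref{lem:2.1} to obtain ${\rm Ric}\geq\frac{n-1}{n^{2}}(n^{2}-2(n-2)S)$, and then insert this into Lemma~\ref{lem:2.2}. Your algebraic simplification of the cross term is in fact slightly cleaner than the paper's, which routes the same computation through $n(n-2)H^{2}$ before converting back to $S$.
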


\begin{proof}
Since $R=1$, from~\eqref{eq:2.7} we have $S=n^{2}H^{2}$. By Lemma~\ref{lem:2.1},
\begin{equation}
\begin{split}
{\rm Ric}&\geq-\dfrac{n-1}{n}\left(S+\frac{n(n-2)}{\sqrt{n(n-1)}}H\sqrt{S-nH^2}-n-2nH^2\right)\\
&\geq-\dfrac{n-1}{n}\left(S+n^{2}H^{2}-n-4nH^2\right)\\
&\geq-\dfrac{n-1}{n^{2}}\left(2(n-2)S-n^{2}\right).
\end{split}
\end{equation}
Hence, from Lemma~\ref{lem:2.2},
\begin{equation}\label{eq:2.11.5}
0<\left(\lambda^{2}\widetilde{C}+\dfrac{n-1}{n^{2}}\left(2(n-2)S-n^{2}\right)\right)\int_{M}|\nabla u|^{2p-2}dM.
\end{equation}
So
\begin{equation}\label{eq:2.11.6}
\lambda^{2}>\dfrac{n-1}{n^{2}\widetilde{C}}(n^{2}-2(n-2)S),
\end{equation}
and therefore,
\begin{equation}\label{eq:2.11.7}
\lambda_{1,p}(M)>\sqrt{(n^{2}-2(n-2)S)C}.
\end{equation}
\end{proof}

We end this paper with the following low estimate to $\lambda_{1,p}(M)$ involving the squared norm of the total umbilicity tensor and without the necessary of prescribed the scalar curvature.
\begin{theorem}\label{thm:2.6}
Let $M^{n}$ be a compact Riemannian manifold having mean convex boundary and consider $M^{n}$ as a submanifold of $\mathbb{S}^{n+q}$ with $S-nH^{2}=const.$. Then
\begin{equation}\label{eq:4.31}
\lambda_{1,p}(M)>\sqrt{\left(4(n-1)-n(S-nH^{2})\right)C},\qquad p\in(3/2,2),
\end{equation}
where $C$ is a positive constant depending only $n,p$ and $M^{n}$.
\end{theorem}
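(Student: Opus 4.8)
The plan is to follow the pattern of the proofs of Theorems~\ref{principal} and~\ref{thm:2.5}: feed a pointwise lower bound for the Ricci curvature, extracted from Lemma~\ref{lem:2.1}, into the integral inequality of Lemma~\ref{lem:2.2}. The one genuinely new feature is that the mean curvature $H$ is no longer pinned down, so it must be eliminated by an optimization. Write $\psi^{2}:=S-nH^{2}\ge 0$ for the squared norm of the total umbilicity tensor (constant by hypothesis) and put $c:=\tfrac{n(n-2)}{\sqrt{n(n-1)}}$. Substituting $S=\psi^{2}+nH^{2}$ into~\eqref{eq:2.9} gives, at every point of $M$,
\begin{equation*}
{\rm Ric}\ \ge\ -\frac{n-1}{n}\Bigl(\psi^{2}-n-nH^{2}+c\,H\psi\Bigr)=:f(H).
\end{equation*}

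First I would minimize the right-hand side over $H\ge 0$, with $\psi$ held fixed. The expression in parentheses is a concave quadratic in $H$ (leading coefficient $-n$), so $f$ is a convex quadratic and attains its minimum at $H_{0}=\tfrac{(n-2)\psi}{2\sqrt{n(n-1)}}\ge 0$. Evaluating there --- and simplifying with $c^{2}/(4n)=(n-2)^{2}/(4(n-1))$ and $(n-2)^{2}+4(n-1)=n^{2}$ --- one finds the pleasant collapse $f(H_{0})=(n-1)-\tfrac{n}{4}\psi^{2}$. Hence, uniformly on $M$,
\begin{equation*}
{\rm Ric}(\nabla u,\nabla u)\ \ge\ \tfrac14\bigl(4(n-1)-n(S-nH^{2})\bigr)\,|\nabla u|^{2},
\end{equation*}
where the coefficient is a bona fide constant precisely because $S-nH^{2}$ is constant.

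Next I would take $u$ to be a first eigenfunction, with eigenvalue $\lambda=\lambda_{1,p}(M)$, and insert this into Lemma~\ref{lem:2.2}; using $|\nabla u|^{2}|\nabla u|^{2p-4}=|\nabla u|^{2p-2}$ this yields
\begin{equation*}
\tfrac14\bigl(4(n-1)-n(S-nH^{2})\bigr)\int_{M}|\nabla u|^{2p-2}\,dM\ <\ \lambda^{2}\,\widetilde{C}\int_{M}|\nabla u|^{2p-2}\,dM .
\end{equation*}
Since $u$ vanishes on $\partial M$ it is nonconstant, so $\int_{M}|\nabla u|^{2p-2}\,dM>0$; cancelling it gives $\lambda^{2}>\tfrac{1}{4\widetilde{C}}\bigl(4(n-1)-n(S-nH^{2})\bigr)$, and~\eqref{eq:4.31} follows with $C:=1/(4\widetilde{C})$. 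As in Theorems~\ref{principal} and~\ref{thm:2.5}, no equality analysis is required, the inequality in Lemma~\ref{lem:2.2} being already strict.

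The only step I expect to demand care is the optimization: one must recognize that, although $H$ varies over $M$, the bound of Lemma~\ref{lem:2.1} is a convex quadratic in $H$ whose minimum over $H\ge 0$ depends on $H$ only through the constant $S-nH^{2}$, and then verify that the vertex value simplifies to $(n-1)-\tfrac{n}{4}(S-nH^{2})$. (Naturally~\eqref{eq:4.31} is substantive only when $4(n-1)-n(S-nH^{2})>0$; in general the displayed inequality for $\lambda^{2}$ is the meaningful conclusion.)
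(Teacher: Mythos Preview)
Your proposal is correct and follows essentially the same route as the paper. The paper derives the identical pointwise bound ${\rm Ric}\ge (n-1)-\tfrac{n}{4}(S-nH^{2})$ by applying the $\varepsilon$-Young inequality $2H\psi\le\varepsilon H^{2}+\varepsilon^{-1}\psi^{2}$ with the specific choice $\varepsilon=2\sqrt{n(n-1)}/(n-2)$---which is precisely your vertex minimization in disguise---and then inserts it into Lemma~\ref{lem:2.2} exactly as you do.
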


\begin{proof}
Following~\cite{Santos:24}, by using $\varepsilon$-Young's inequality $2xy\leq\varepsilon x^{2}+\varepsilon^{-1}y^{2}$ for
\begin{equation*}\label{eq:4.23}
x=H\quad\mbox{and}\quad y=\sqrt{S-nH^{2}}
\end{equation*}
we have
\begin{equation}\label{eq:4.24}
2H\sqrt{S-nH^{2}}\leq\varepsilon H^{2}+\varepsilon^{-1}(S-nH^{2}).
\end{equation}
Consequently, by taking $\varepsilon=\frac{2\sqrt{n(n-1)}}{n-2}$, \eqref{eq:4.24} reads
\begin{equation*}\label{eq:4.25}
\dfrac{n(n-2)}{\sqrt{n(n-1)}}H\sqrt{S-nH^{2}}\leq nH^{2}+\dfrac{(n-2)^{2}}{4(n-1)}(S-nH^{2}).
\end{equation*}
Hence, inserting this in Lemma~\ref{eq:2.11} we reach at the following inequality
\begin{equation}\label{eq:4.26}
\begin{split}
\mathrm{Ric}(\nabla u,\nabla u)&\geq-\dfrac{n-1}{n}\left(S+nH^{2}+\dfrac{(n-2)^{2}}{4(n-1)}(S-nH^{2})-n-2nH^{2}\right)|\nabla u|^2\\
&=\dfrac{n-1}{n}\left(n-\dfrac{n^{2}}{4(n-1)}(S-nH^{2})\right)|\nabla u|^2.
\end{split}
\end{equation}
By replacing in Lemma~\ref{lem:2.2},
\begin{equation}\label{eq:2.11.8}
(n-1)\left(1-\dfrac{n}{4(n-1)}(S-nH^{2})\right)\int_{M}|\nabla u|^{2p-2}dM<\lambda^{2}\widetilde{C}\int_{M}|\nabla u|^{2p-2}dM
\end{equation}
and so,
\begin{equation}\label{eq:2.11.9}
0<\lambda^{2}\widetilde{C}-\dfrac{1}{4}\left(4(n-1)-n(S-nH^{2})\right).
\end{equation}
Hence
\begin{equation}
\lambda_{1,p}(M)^{2}>\left(4(n-1)-n(S-nH^{2})\right)C.
\end{equation}
\end{proof}

\begin{remark}
It should notice that, by applying a similar argument, the Theorems~\ref{thm:2.5} and~\ref{thm:2.6} also holds for closed manifolds. In fact, the H\"older inequality used in~\eqref{eq:3.36.1} does not depends of the boundary. Moreover, the Sobolev inequality~\eqref{sobolev} is still true for closed manifolds (see~\cite[Theorem 4.5]{Hebey:00}).
\end{remark}

\subsection*{Acknowledgment}
The first author is partially supported by CNPq, Brazil, grant 311124/2021-6 and Propesqi (UFPE). The second author is partially supported by CNPq, Brazil.

\end{document}